\documentclass{article}
%%%%%%%%%%%%%%%%%%%%%%%%%%%%%%%%%%%%%%%%%%%%%%%%%%%%%%%%%%%%%%%%%%%%%%%%%%%%%%%%%%%%%%%%%%%%%%%%%%%%%%%%%%%%%%%%%%%%%%%%%%%%%%%%%%%%%%%%%%%%%%%%%%%%%%%%%%%%%%%%%%%%%%%%%%%%%%%%%%%%%%%%%%%%%%%%%%%%%%%%%%%%%%%%%%%%%%%%%%%%%%%%%%%%%%%%%%%%%%%%%%%%%%%%%%%%
\usepackage{amsfonts}
\usepackage{amsmath}

\setcounter{MaxMatrixCols}{10}
%TCIDATA{OutputFilter=LATEX.DLL}
%TCIDATA{Version=5.50.0.2953}
%TCIDATA{<META NAME="SaveForMode" CONTENT="1">}
%TCIDATA{BibliographyScheme=Manual}
%TCIDATA{Created=Monday, July 21, 2014 02:49:03}
%TCIDATA{LastRevised=Tuesday, July 22, 2014 02:58:42}
%TCIDATA{<META NAME="GraphicsSave" CONTENT="32">}
%TCIDATA{<META NAME="DocumentShell" CONTENT="Standard LaTeX\Standard LaTeX Article">}
%TCIDATA{CSTFile=40 LaTeX article.cst}

\newtheorem{theorem}{Theorem}

\newtheorem{lemma}[theorem]{Lemma}

\newenvironment{proof}[1][Proof]{\noindent\textbf{#1.} }{\ \rule{0.5em}{0.5em}}
\input{tcilatex}
\begin{document}

\title{The Regularized Trace of Sturm-Liouville Problem with Discontinuities
at Two Points}
\author{F. H\i ra and N. Alt\i n\i \c{s}\i k \\
%EndAName
Department of Mathematics,Arts and Science  Faculty, \\
Ondokuz May\i s University, 55139, Samsun, Turkey\\
email: fatmahira@yahoo.com.tr}
\maketitle

\begin{abstract}
In this paper, we obtain a regularized trace formula for a Sturm Liouville
problem which has two points of discontinuity and also contains an
eigenparameter in a boundary ondition.
\end{abstract}

\section{Introduction}

The regularized trace for the classical Sturm Liouville problem was first
calculated by Gelfand and Levitan [19]. Then this work was continued by many
authors ( see [1], [5-8], [12-18] and [20],respectively). A regularized
trace formula for Sturm-Liouville equation with one or two boundary
conditions depending on a spectral parameter was investigated in
[2,4,6,11,22,24]. The regularized trace formula of the infinite sequence of
eigenvalues for some version of a Dirichlet boundary value problem with
turning points was calculated in [9]. All of these works are on the traces
of continuous boundary value problems.

Consider the boundary value problem%
\begin{equation}
\tau \left( u\right) :=-u^{\prime \prime }+q\left( x\right) u=\lambda u,%
\text{ ~}x\in I,  \tag{1.1}
\end{equation}%
with boundary conditions%
\begin{equation}
B_{a}\left( u\right) :=u^{\prime }\left( a\right) =0,  \tag{1.2}
\end{equation}%
\begin{equation}
B_{b}\left( u\right) :=\left( \lambda -h\right) u\left( b\right) -u^{\prime
}\left( b\right) =0,  \tag{1.3}
\end{equation}%
and transmission conditions at two points of discontinuity%
\begin{equation}
T_{c_{1}}\left( u\right) :=\binom{u\left( c_{1}+\right) }{u^{\prime }\left(
c_{1}+\right) }-\frac{1}{\delta }\binom{u\left( c_{1}-\right) }{u^{\prime
}\left( c_{1}-\right) }=0,  \tag{1.4}
\end{equation}%
\begin{equation}
T_{c_{2}}\left( u\right) :=\binom{u\left( c_{2}-\right) }{u^{\prime }\left(
c_{2}-\right) }-\frac{\gamma }{\delta }\binom{u\left( c_{2}+\right) }{%
u^{\prime }\left( c_{2}+\right) }=0,  \tag{1.5}
\end{equation}%
where$~I:=\left[ a,c_{1}\right) \cup \left( c_{1},c_{2}\right) \cup \left(
c_{2},b\right] ,~\lambda $ is a spectral parameter, $q\left( x\right) $ is a
real valued function which is continuous in$~\left[ a,c_{1}\right) ,$ $%
\left( c_{1},c_{2}\right) $ and $\left( c_{2},b\right] $ and has finite
limits $q\left( c_{1}\pm \right) :=\lim_{x\rightarrow c_{1}\pm }q\left(
x\right) ,$ $q\left( c_{2}\pm \right) :=\lim_{x\rightarrow c_{2}\pm }q\left(
x\right) $, $h,~\delta ,~\gamma $ are real numbers $\delta \neq 0,$ $\gamma
\neq 0.$

As far as we know, there are two works about the regularized trace of
discontinuous boundary value problems [21,23]. They are investigated by
Yang. In [21] and [23], the author studied regularized sums from the
eigenvalues, oscillations of eigenfunctions and the solutions of inverse
nodal problem for a discontinuous boundary value problem with retarded
argument and obtained some formulas for the regularized traces of
second-order differential operators with discontinuities inside a finite
interval, respectively. In these works, the problem has one point of
discontinuity and not contain a spectral parameter in boundary conditions.

The aim of the present paper is to obtain a formula for the regularized
trace of the problem (1.1)-(1.5). The problem (1.1)-(1.5) is a Sturm
Liouville problem which has two points of discontinuity and the boundary
conditions depending on an eigenparameter. The regularized trace of Sturm
Liouville problem with two discontinuities has not been studied before. We
follow the method for computing the traces of the problem (1.1)-(1.5) in
[16,17]. Firstly, we give some preliminaries for asmptotic formulas of
solution and eigenvalues. Then, we proved the regularized first trace
formula for the problem (1.1)-(1.5).

\section{Preliminaries}

Let us denote the solution of equation (1.1) by%
\begin{equation}
\phi \left( x,\lambda \right) =\left\{ 
\begin{array}{c}
\phi _{1}\left( x,\lambda \right) ,\text{ \ \ \ }x\in \lbrack a,c_{1}), \\ 
\phi _{2}\left( x,\lambda \right) ,\text{\ }x\in \left( c_{1},c_{2}\right) ,
\\ 
\phi _{3}\left( x,\lambda \right) ,\text{ \ \ \ }x\in \left( c_{2},b\right] ,%
\end{array}%
\right.   \tag{2.1}
\end{equation}%
satisfying the initial conditions%
\begin{equation}
\phi _{1}\left( a,\lambda \right) =1,\ \phi _{1}^{\prime }\left( a,\lambda
\right) =0,  \tag{2.2}
\end{equation}%
\begin{equation}
\phi _{2}\left( c_{1},\lambda \right) =\delta ^{-1}\phi _{1}\left(
c_{1}-,\lambda \right) ,\text{ \ }\phi _{2}^{\prime }\left( c_{1},\lambda
\right) =\delta ^{-1}\phi _{1}^{\prime }\left( c_{1}-,\lambda \right) , 
\tag{2.3}
\end{equation}%
\begin{equation}
\phi _{3}\left( c_{2},\lambda \right) =\delta \gamma ^{-1}\phi _{2}\left(
c_{2}-,\lambda \right) ,\text{ \ }\phi _{3}^{\prime }\left( \theta
_{+\varepsilon }\right) =\delta \gamma ^{-1}\phi _{2}^{\prime }\left(
c_{2}-,\lambda \right) .  \tag{2.4}
\end{equation}%
It is obvious that$~\phi \left( x,\lambda \right) $ satisfies the equation
(1.1) on $I,$~the boundary condition (1.2) and the transmission conditions
(1.4) and (1.5).

The asymptotics formulas of the eigenvalues and eigenfunctions can be
derived similar to the classical techniques of $\left[ 3,5,13,19\right] .$
We state the results briefly. Denote $\lambda =s^{2}.~$The solution of
equation (1.1), fulfilling the conditions (2.2)-(2.4), satisfies the
integral equation%
\begin{eqnarray}
\phi _{3}\left( x,\lambda \right) &=&\frac{1}{\gamma }\cos \left( s\left(
x-a\right) \right) +\frac{1}{2s\gamma }\left( Q_{1}\left( c_{1}\right)
+Q_{2}\left( c_{2}\right) \right) \sin \left( s\left( x-a\right) \right) + 
\notag \\
&&\frac{1}{4s^{2}\gamma }\left\{ q\left( c_{2}\right) \cos \left( s\left(
x-2c_{2}+a\right) \right) -\left( q\left( a\right) +Q_{1}\left( c_{1}\right)
Q_{2}\left( c_{2}\right) \right) \times \right. +  \notag \\
&&\left. \cos \left( s\left( x-a\right) \right) \right\} +\frac{1}{%
8s^{3}\gamma }\left\{ q\left( c_{1}\right) \left( Q_{1}\left( c_{1}\right)
+Q_{2}\left( c_{2}\right) \right) \sin \left( s\left( x-2c_{1}+a\right)
\right) -\right.  \notag \\
&&\left( q\left( c_{2}\right) Q_{1}\left( c_{1}\right) -q^{\prime }\left(
c_{2}\right) \right) \sin \left( s\left( x-2c_{2}+a\right) \right) -\left(
q^{\prime }\left( a\right) +q\left( a\right) Q_{2}\left( c_{2}\right)
\right) \times +  \notag \\
&&\left. \sin \left( s\left( x-a\right) \right) \right\} +\dfrac{1}{s}%
\underset{c_{2}}{\overset{x}{\int }}\sin \left( s\left( x-y\right) \right)
q\left( y\right) \phi _{3}\left( y,\lambda \right) dy,  \TCItag{2.5}
\end{eqnarray}%
where%
\begin{equation}
Q_{1}\left( x\right) =\underset{a}{\overset{x}{\int }}q\left( y\right) dy,~%
\text{\ \ \ }Q_{2}\left( x\right) =\underset{c_{1}}{\overset{x}{\int }}%
q\left( y\right) dy.  \tag{2.6}
\end{equation}

Solving the equation (2.5) by the method of successive approximations, we
obtain

\begin{eqnarray}
\phi _{3}\left( x,\lambda \right) &=&\frac{1}{\gamma }\cos \left( s\left(
x-a\right) \right) +\frac{\alpha _{1}\left( x\right) }{s\gamma }\sin \left(
s\left( x-a\right) \right) +  \notag \\
&&\frac{\alpha _{2}\left( x\right) }{s^{2}\gamma }\cos \left( s\left(
x-a\right) \right) +\frac{1}{s^{3}\gamma }\left\{ \alpha _{3}\left( x\right)
\sin \left( s\left( x-a\right) \right) +\right.  \notag \\
&&\left. \alpha _{4}\left( x\right) \sin \left( s\left( x-2c_{2}+a\right)
\right) \right\} +O\left( \frac{1}{s^{4}}\right) ,  \TCItag{2.7}
\end{eqnarray}

\begin{eqnarray}
\phi _{3}^{\prime }\left( x,\lambda \right) &=&-\frac{s}{\gamma }\sin \left(
s\left( x-a\right) \right) +\frac{\alpha _{1}\left( x\right) }{\gamma }\cos
\left( s\left( x-a\right) \right) +\frac{1}{s\gamma }\left( \alpha
_{1}^{\prime }\left( x\right) -\alpha _{2}\left( x\right) \right) \times 
\notag \\
&&\sin \left( s\left( x-a\right) \right) +\frac{1}{s^{2}\gamma }\left\{
\left( \alpha _{3}\left( x\right) +\alpha _{3}^{\prime }\left( x\right)
\right) \cos \left( s\left( x-a\right) \right) +\right.  \notag \\
&&\left. \alpha _{4}\left( x\right) \cos \left( s\left( x-2c_{2}+a\right)
\right) \right\} +\frac{1}{s^{3}\gamma }\left\{ \alpha _{3}^{\prime }\left(
x\right) \sin \left( s\left( x-a\right) \right) +\right.  \notag \\
&&\left. \alpha _{4}^{\prime }\left( x\right) \sin \left( s\left(
x-2c_{2}+a\right) \right) \right\} +O\left( \frac{1}{s^{4}}\right) , 
\TCItag{2.8}
\end{eqnarray}%
where%
\begin{eqnarray}
\alpha _{1}\left( x\right) &=&\frac{1}{2}\left( Q_{1}\left( c_{1}\right)
+Q_{2}\left( c_{2}\right) +Q_{3}\left( x\right) \right) ,  \notag \\
\alpha _{2}\left( x\right) &=&\frac{1}{4}\left( q\left( x\right) -q\left(
a\right) -Q_{1}\left( c_{1}\right) Q_{2}\left( c_{2}\right) -Q_{3}\left(
x\right) \left( Q_{1}\left( c_{1}\right) +Q_{2}\left( c_{2}\right) \right)
\right) ,  \notag \\
\alpha _{3}\left( x\right) &=&\frac{1}{8}\left( q\left( x\right) \left(
Q_{1}\left( c_{1}\right) +Q_{2}\left( c_{2}\right) \right) -q^{\prime
}\left( x\right) -q^{\prime }\left( a\right) -q\left( a\right) Q_{2}\left(
c_{2}\right) +\right.  \notag \\
&&q\left( c_{1}\right) \left( Q_{1}\left( c_{1}\right) +Q_{2}\left(
c_{2}\right) \right) \left. -Q_{3}\left( x\right) \left( q\left( a\right)
+Q_{1}\left( c_{1}\right) Q_{2}\left( c_{2}\right) \right) \right) ,  \notag
\\
\alpha _{4}\left( x\right) &=&\frac{1}{4}\left( q\left( c_{2}\right)
Q_{1}\left( c_{1}\right) -q^{\prime }\left( c_{2}\right) \right) +\frac{1}{8}%
q\left( c_{2}\right) \left( Q_{3}\left( x\right) +Q_{2}\left( c_{2}\right)
\right) ,  \notag \\
Q_{3}\left( x\right) &=&\underset{c_{2}}{\overset{x}{\int }}q\left( y\right)
dy.  \TCItag{2.9}
\end{eqnarray}

It is obvious that the characteristic function $\omega \left( \lambda
\right) $ of the problem (1.1)-(1.5) is as follows%
\begin{equation}
\omega \left( \lambda \right) =\left( \lambda -h\right) \phi _{3}\left(
b,\lambda \right) -\phi _{3}^{\prime }\left( b,\lambda \right) ,  \tag{2.10}
\end{equation}%
and the eigenvalues of the problem (1.1)-(1.5) coincide with the roots of $%
\omega \left( \lambda \right) .$ From (2.7) and (2.8) we have%
\begin{eqnarray}
\omega \left( \lambda \right) &=&\frac{s^{2}}{\gamma }\cos \left( s\left(
b-a\right) \right) +\frac{s}{\gamma }\left( 1+\alpha _{1}\left( b\right)
\right) \sin \left( s\left( b-a\right) \right) +  \notag \\
&&\frac{1}{\gamma }\left( \alpha _{2}\left( b\right) -\alpha _{1}\left(
b\right) -h\right) \cos \left( s\left( b-a\right) \right) +\frac{1}{s\gamma }%
\left\{ \left( \alpha _{3}\left( b\right) -h\alpha _{1}\left( b\right)
\right. +\right.  \notag \\
&&\left. \left. \alpha _{2}\left( b\right) -\alpha _{1}^{\prime }\left(
b\right) \right) \sin \left( s\left( b-a\right) \right) +\alpha _{4}\left(
b\right) \sin \left( s\left( b-2c_{2}+a\right) \right) \right\} +  \notag \\
&&O\left( \frac{1}{s^{2}}\right) .  \TCItag{2.11}
\end{eqnarray}

By the Rouche theorem and from the asymptotic formula (2.11), we obtain%
\begin{equation}
s_{n}=\frac{\left( n-1/2\right) \pi }{b-a}+\frac{1}{\left( n-1/2\right) \pi }%
\left( 1+\alpha _{1}\left( b\right) \right) +O\left( \frac{1}{n^{2}}\right) .
\tag{2.12}
\end{equation}

It follows from (2.12) that%
\begin{equation}
\lambda _{n}=\left( \frac{\left( n-1/2\right) \pi }{b-a}\right)
^{2}+K+O\left( \frac{1}{n^{2}}\right) ,  \tag{2.13}
\end{equation}%
where 
\begin{equation}
K=\frac{2}{b-a}\left( 1+\alpha _{1}\left( b\right) \right) ,  \tag{2.14}
\end{equation}%
\begin{equation}
\alpha _{1}\left( b\right) =\frac{1}{2}\left( Q_{1}\left( c_{1}\right)
+Q_{2}\left( c_{2}\right) +Q_{3}\left( b\right) \right) ,  \tag{2.15}
\end{equation}%
\begin{equation}
Q_{1}\left( c_{1}\right) =\underset{a}{\overset{c_{1}}{\int }}q\left(
y\right) dy,\text{ \ }Q_{2}\left( c_{2}\right) =\underset{c_{1}}{\overset{%
c_{2}}{\int }}q\left( y\right) dy,\text{ \ \ }Q_{3}\left( b\right) =\underset%
{c_{2}}{\overset{b}{\int }}q\left( y\right) dy.  \tag{2.16}
\end{equation}

\section{Traces of The Problem}

The series%
\begin{equation}
\lambda _{0}+\sum\limits_{n=1}^{\infty }\left( \lambda _{n}-\left( \frac{%
\left( n-1/2\right) \pi }{b-a}\right) ^{2}-K\right) <\infty  \tag{3.1}
\end{equation}%
converges and is called the regularized first trace for the problem
(1.1)-(1.5). The goal\ of this paper is to prove its sum. Our proof is based
on the works of [16,17].

\begin{theorem}
Suppose that $q\left( x\right) $ has a second-order piecewise integrable
derivatives on $\left[ a,b\right] ,~$then the following regularized trace
formula holds%
\begin{eqnarray}
s_{\lambda } &=&\sum\limits_{n=0}^{\infty }\left( \lambda _{n}-\left( \frac{%
\left( n-1/2\right) \pi }{b-a}\right) ^{2}-K\right)  \notag \\
&=&h-\frac{1}{2}-\frac{2}{b-a}-\frac{\pi ^{2}}{4\left( b-a\right) ^{2}}-%
\frac{1}{4}\left( q\left( b\right) -q\left( a\right) \right) -  \notag \\
&&\frac{1}{b-a}\left( Q_{1}\left( c_{1}\right) +Q_{2}\left( c_{2}\right)
+Q_{3}\left( b\right) \right) -  \notag \\
&&\frac{1}{8}\left( Q_{1}^{2}\left( c_{1}\right) +Q_{2}^{2}\left(
c_{2}\right) +Q_{3}^{2}\left( b\right) \right) ,  \TCItag{3.2}
\end{eqnarray}%
\textit{where }$K,\alpha _{1}\left( b\right) $ and $Q_{i}\left( c_{i}\right) 
$ $\left( i=1,2,3\right) $ satisfies the equations (2.14)-(2.16).\textit{\ }
\end{theorem}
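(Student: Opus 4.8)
The plan is to evaluate the regularized trace by the classical contour-integration method of Gelfand--Levitan as adapted in [16,17], exploiting the fact that the eigenvalues $\lambda_n$ are the zeros of the characteristic function $\omega(\lambda)$ from (2.10). The central device is the identity
\begin{equation*}
\sum_{n} f(\lambda_n) = \frac{1}{2\pi i}\oint_{\Gamma} f(\lambda)\,\frac{\omega'(\lambda)}{\omega(\lambda)}\,d\lambda,
\end{equation*}
valid when $f$ is holomorphic inside the contour $\Gamma$, since $\omega'/\omega$ has simple poles with residue $1$ at each zero of $\omega$. To capture the regularized difference $\lambda_n-\big((n-1/2)\pi/(b-a)\big)^2-K$ one subtracts the analogous logarithmic-derivative term for the ``unperturbed'' characteristic function $\omega_0(\lambda)$ whose zeros are exactly $\big((n-1/2)\pi/(b-a)\big)^2+K$. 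Concretely, I would take contours $\Gamma_N$ to be large circles (or rectangles) enclosing the first $N$ eigenvalues, write the partial sum $s_\lambda^{(N)}$ as a contour integral of $\lambda$ against $\tfrac{\omega'}{\omega}-\tfrac{\omega_0'}{\omega_0}$, and then let $N\to\infty$.

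The first concrete step is to produce a refined asymptotic expansion of $\omega(\lambda)$ in powers of $1/s$ (extending (2.11) by one or two further orders), because the $O(1/s^2)$ remainder there is not sharp enough to control the $1/s^2$ and $1/s^3$ contributions that survive in the final constant. Using $\lambda=s^2$, I would factor $\omega(\lambda)$ as $\tfrac{s^2}{\gamma}\cos(s(b-a))$ times $\big(1+(\text{lower-order terms in }1/s)\big)$, take a logarithm, and differentiate to get an asymptotic expansion of $\frac{\omega'(\lambda)}{\omega(\lambda)}$ whose leading term reproduces the unperturbed spectrum and whose correction terms are explicit trigonometric polynomials in $s(b-a)$ and $s(b-2c_2+a)$ divided by powers of $s$. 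The quantities $\alpha_1(b),\dots,\alpha_4(b)$ from (2.9), evaluated via (2.15)--(2.16), feed directly into these coefficients, so the integrals $Q_1(c_1),Q_2(c_2),Q_3(b)$ and the boundary values $q(a),q(b)$ enter the trace through exactly these terms.

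The main computation is then to integrate $\lambda\big(\tfrac{\omega'}{\omega}-\tfrac{\omega_0'}{\omega_0}\big)$ term by term over the contour. On the circular arcs the bounded oscillatory factors like $\cot(s(b-a))$ and $\sin(s(b-2c_2+a))/\sin(s(b-a))$ must be estimated so that, in the limit, only the constant (non-oscillating) Fourier components contribute and the purely oscillatory pieces integrate to zero. This is where the contributions $-\tfrac14(q(b)-q(a))$, the terms linear in the $Q_i$, and the quadratic terms $-\tfrac18(Q_1^2(c_1)+Q_2^2(c_2)+Q_3^2(b))$ are generated, together with the explicit numerical constants $h-\tfrac12-\tfrac{2}{b-a}-\tfrac{\pi^2}{4(b-a)^2}$ coming from the eigenparameter $h$ in (1.3), the normalization $1/\gamma$, and the half-integer shift in the spectrum. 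Throughout, the hypothesis that $q$ has piecewise integrable second derivatives on $[a,b]$ is what legitimizes the integration by parts needed to pass from $q'$-terms to boundary values and guarantees absolute convergence of the series (3.1).

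The hard part, and the step most prone to error, will be the careful bookkeeping of the $1/s^2$ and $1/s^3$ terms in the expansion of $\omega'/\omega$: one must retain enough orders that every term contributing an $O(1/n^2)$-summable-to-a-finite-constant piece is kept, while discarding genuinely oscillatory or higher-order terms. Equivalently, one must verify that the remainder after subtracting $\omega_0'/\omega_0$ is $O(1/s^3)$ uniformly on the contours, so that the tail of the series converges and the contour integrals over the remainder vanish in the limit $N\to\infty$. Matching the surviving constants against (3.2) — in particular correctly tracking how the two transmission points $c_1,c_2$ split the single integral $\int_a^b q$ into the three pieces $Q_1(c_1),Q_2(c_2),Q_3(b)$ and how their squares assemble — is the delicate accounting that distinguishes this two-discontinuity problem from the continuous case treated in [16,17].
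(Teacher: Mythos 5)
Your contour--residue strategy is sound and, carried out carefully, would reproduce (3.2); but it is genuinely different from the paper's route. The paper follows Levitan [16,17] --- its key Lemma, inequality (3.9), is proved ``similar to Lemma 5.6.1 in [17, Ch.\ 5]'' --- and that method never integrates over contours. Instead one represents $\omega(\lambda)$ and the unperturbed characteristic function $\omega_{0}(\lambda)$ as infinite products over their zeros and evaluates both on the negative real semi-axis $\lambda=-\mu^{2}$, $\mu\to+\infty$. Then $\ln\bigl(\omega(-\mu^{2})/\omega_{0}(-\mu^{2})\bigr)$ equals, up to an explicit constant, $\sum_{n}\ln\bigl(1+\frac{\lambda_{n}-\lambda_{n}^{0}}{\mu^{2}+\lambda_{n}^{0}}\bigr)$ with $\lambda_{n}^{0}=\bigl(\tfrac{(n-1/2)\pi}{b-a}\bigr)^{2}$; the linear terms of this expansion carry the trace, and the quadratic and higher terms are exactly what the displayed Lemma (with $k\ge 2$, giving $O(\mu^{-(2k-1)})$) is designed to discard. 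The trace then falls out by matching this against the explicit asymptotics of $\omega(-\mu^{2})$, i.e.\ (2.11) continued to order $1/s^{3}$ with $s=i\mu$, where all sines and cosines become hyperbolic functions. The practical advantage of the paper's route lies precisely at the step you flag as hardest: on the ray $\lambda=-\mu^{2}$ the characteristic function has no zeros, and the discontinuity-induced oscillation $\alpha_{4}(b)\sin\bigl(s(b-2c_{2}+a)\bigr)$ becomes an exponentially small relative correction, so no ``oscillatory pieces integrate to zero'' argument and no threading of contours between eigenvalues (with uniform cotangent-type bounds) is required; everything reduces to one-variable asymptotics in $\mu$. Conversely, your method buys an exact residue identity for the partial sums and avoids invoking the Hadamard factorization, at the price of uniform estimates of $\omega'/\omega-\omega_{0}'/\omega_{0}$ near the positive real axis. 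The final bookkeeping --- the refined $1/s^{2}$ and $1/s^{3}$ coefficients via $\alpha_{1},\dots,\alpha_{4}$, and how $Q_{1}(c_{1}),Q_{2}(c_{2}),Q_{3}(b)$ and their squares assemble into (3.2) --- is the same burden in either approach.
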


\begin{proof}
...

\begin{lemma}
If $\left\vert \left( \frac{\left( n-1/2\right) \pi }{b-a}\right)
^{2}-\lambda _{n}\right\vert \leq \rho ,$ then 
\begin{equation}
\sum\limits_{n=1}^{\infty }\dfrac{\left\vert \left( \frac{\left(
n-1/2\right) \pi }{b-a}\right) ^{2}-\lambda _{n}\right\vert ^{k}}{\left( \mu
^{2}+\left( \frac{\left( n-1/2\right) \pi }{b-a}\right) ^{2}\right) ^{k}}%
\leq \rho ^{k}\frac{b-a}{2}\frac{1}{\mu ^{2k-1}}.  \tag{3.9}
\end{equation}
\end{lemma}
\end{proof}

\begin{proof}
This lemma can be proved similar to proof of Lemma 5.6.1 in $\left[ 17,Ch5%
\right] .$

Substituting (2.9), (3.8), (3.14) and (3.20) into (3.23) we obtain%
\begin{eqnarray*}
s_{\lambda } &=&\sum\limits_{n=0}^{\infty }\left( \lambda _{n}-\left( \frac{%
\left( n-1/2\right) \pi }{b-a}\right) ^{2}-K\right)  \\
&=&h-\frac{1}{2}-\frac{2}{b-a}-\frac{\pi ^{2}}{4\left( b-a\right) ^{2}}-%
\frac{1}{4}\left( q\left( b\right) -q\left( a\right) \right) - \\
&&\frac{1}{b-a}\left( Q_{1}\left( c_{1}\right) +Q_{2}\left( c_{2}\right)
+Q_{3}\left( b\right) \right) -\frac{1}{8}\left( Q_{1}^{2}\left(
c_{1}\right) +Q_{2}^{2}\left( c_{2}\right) +Q_{3}^{2}\left( b\right) \right) 
\end{eqnarray*}%
where%
\begin{equation*}
Q_{1}\left( c_{1}\right) =\underset{a}{\overset{c_{1}}{\int }}q\left(
y\right) dy,\ \ \ Q_{2}\left( c_{2}\right) =\underset{c_{1}}{\overset{c_{2}}{%
\int }}q\left( y\right) dy,\ \ Q_{3}\left( b\right) =\underset{c_{2}}{%
\overset{b}{\int }}q\left( y\right) dy
\end{equation*}%
completing the proof of Theorem 1.%
\begin{equation*}
\end{equation*}
\end{proof}

\end{document}